\newtheorem{theorem}{Theorem}
\newtheorem{lemma}{Lemma}
\newenvironment{proof}[1][Proof]
	{\noindent\textbf{#1.} }
	{\hfill\rule{0.5em}{0.5em}}
\begin{document}

\title{Constructing the Primitive Roots of Prime Powers}
\author{Nathan Jolly}

\maketitle

\begin{abstract}
	We use only addition and multiplication to construct the
	primitive roots of $p^{k+1}$ from the primitive roots of $p^{k}$,
	where $p$ is an odd prime and $k \ge 2$.
\end{abstract}

\section{Introduction}
\label{sec:intro}

There is a well-known result (Lemma~\ref{lem:powersofg}) which gives a
construction for all the primitive
roots of a given positive integer $n$ in terms of any one of them. This
construction is based on exponentiation.

In contrast, Niven et al.~take a different approach to constructing primitive
roots. Their result (Lemma~\ref{lem:niven}) gives an explicit
construction of all the primitive roots of $p^{2}$ from the primitive roots of
$p$, where $p$ is any prime. This alleviates the need for taking a single
primitive root of $p^{2}$ and exponentiating it in order to get the remaining
primitive roots. It also has the added benefit of shedding light on how the
primitive roots of $p^{2}$ are arranged, in terms of the primitive roots of
$p$. Furthermore, every primitive root constructed requires only a single
addition and multiplication, so the construction is easy to do by hand.

In this article, we show how to construct the primitive roots of $p^{k+1}$
from the primitive roots of $p^{k}$, where $k \ge 2$, and $p$ is an odd
prime.%
\footnote{Powers of the even prime $2$ do not have primitive roots in general.
For nice proofs of this result, see \cite{ref:ore} and \cite{ref:romero}.}
The construction builds on the above result of Niven et al.~\cite{ref:niven},
and allows us to construct the primitive roots of arbitrary prime powers in
terms of the primitive roots of the primes.

\section{Preliminaries}
\label{sec:prelim}

Let $n$ be a positive integer, and suppose that $a$ is relatively prime to
$n$. The \emph{order of $a$ mod $n$} is the smallest positive integer $m$
such that $a^{m} \equiv 1 \pmod{n}$, and we write $m = \mathrm{ord}_{n}(a)$.
The following result is used throughout this article.

\begin{lemma}[\cite{ref:dudley,ref:niven}]\label{lem:important}
	If $a$ and $n$ are relatively prime, then $a^{k} \equiv 1 \pmod{n}$
	if and only if $\mathrm{ord}_{n}(a)$ divides $k$.
\end{lemma}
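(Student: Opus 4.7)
The plan is to prove both directions by invoking the definition of order together with the division algorithm. Let $m = \mathrm{ord}_{n}(a)$, which exists because $\gcd(a,n) = 1$ guarantees (for instance, by Euler's theorem) that some positive power of $a$ is congruent to $1$ modulo $n$.

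For the ``if'' direction I would simply write $k = mq$ and observe that $a^{k} = (a^{m})^{q} \equiv 1^{q} \equiv 1 \pmod{n}$; this step is essentially immediate from the definitions and requires nothing beyond the hypothesis that $m \mid k$.

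The ``only if'' direction is the substantive step. Assuming $a^{k} \equiv 1 \pmod{n}$, I would apply the division algorithm to write $k = mq + r$ with $0 \le r < m$. From $a^{mq} = (a^{m})^{q} \equiv 1 \pmod{n}$ and $a^{mq} \cdot a^{r} = a^{k} \equiv 1 \pmod{n}$, I would cancel $a^{mq}$ (legal because $\gcd(a,n) = 1$ makes $a^{mq}$ a unit mod $n$) to deduce $a^{r} \equiv 1 \pmod{n}$. Since $m$ is by definition the \emph{smallest positive} integer with this property, and $0 \le r < m$, the only possibility is $r = 0$, giving $m \mid k$.

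The main subtlety, such as it is, lies in the cancellation step: one needs the relative primality hypothesis a second time to invert $a^{mq}$ modulo $n$. Apart from that, the argument is routine, and there is no genuine obstacle — the result is really just the statement that in a finite cyclic subgroup of $(\mathbb{Z}/n\mathbb{Z})^{\times}$, the set of exponents annihilating $a$ is exactly the ideal generated by $m$ in $\mathbb{Z}$.
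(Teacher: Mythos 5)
Your proof is correct and is the standard division-algorithm argument; the paper itself states this lemma without proof, citing Dudley and Niven et al., where essentially your argument appears. One small simplification: in the ``only if'' direction you do not need coprimality to cancel $a^{mq}$, since $a^{mq} \equiv 1 \pmod{n}$ already gives $a^{r} \equiv a^{mq}a^{r} = a^{k} \equiv 1 \pmod{n}$ by direct substitution --- the coprimality hypothesis is only needed once, to guarantee via Euler's theorem that the order exists.
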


Now Euler's theorem says that $a^{\varphi(n)} \equiv 1 \pmod{n}$, where
$\varphi(n)$ denotes the Euler totient function. It follows from
Lemma~\ref{lem:important} that $\mathrm{ord}_{n}(a)$ is always a positive
divisor of $\varphi(n)$, and hence cannot exceed $\varphi(n)$.
In fact $\mathrm{ord}_{n}(a)$ can actually reach $\varphi(n)$, and if it
does then $a$ is called a \emph{primitive root of $n$}.
For more information about orders and primitive roots, see
\cite{ref:burton}, \cite{ref:dudley}, \cite{ref:niven},
\cite{ref:ore}, \cite{ref:romero}, and \cite{ref:rosen}.

It is easy to verify that every integer congruent to $a$ mod $n$ has the
same order as $a$, so it suffices to consider single representatives from
every congruence class of integers mod $n$ which are relatively prime to
$n$. Niven et al.~\cite{ref:niven} call any such set of representatives a
\emph{reduced residue system mod $n$}, and it is easy to verify that there
are always $\varphi(n)$ elements in any one of these sets. The reduced residue
system we will be using in this article is the standard one, given by the set
\begin{equation}
	\{ 1 \le a \le n : \gcd(a,n) = 1 \}.
	\label{eqn:reduced}
\end{equation}

We refer to the number of primitive roots found in the set
(\ref{eqn:reduced}) above as \emph{the number of primitive roots of $n$}.
It turns out that if $n$ has primitive roots, then the number of primitive
roots of $n$ is exactly $\varphi(\varphi(n))$. This is a direct consequence
of the following result.

\begin{lemma}[{\cite[Corollary~$8.4.1$]{ref:rosen}}]\label{lem:powersofg}
	If $g$ is a primitive root of $n$, then the primitive roots
	of $n$ are given by the numbers $g^{k}$, where $k$ is relatively
	prime to $\varphi(n)$.
\end{lemma}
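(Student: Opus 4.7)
The plan is to establish two facts: first, that every primitive root of $n$ is congruent to some power $g^{k}$ with $1 \le k \le \varphi(n)$; and second, that $\mathrm{ord}_{n}(g^{k}) = \varphi(n)$ precisely when $\gcd(k,\varphi(n)) = 1$.

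For the first fact, I would show that the $\varphi(n)$ powers $g, g^{2}, \ldots, g^{\varphi(n)}$ are pairwise incongruent mod $n$: if $g^{i} \equiv g^{j} \pmod{n}$ with $1 \le i < j \le \varphi(n)$, then $g^{j-i} \equiv 1 \pmod{n}$, contradicting $\mathrm{ord}_{n}(g) = \varphi(n)$ since $0 < j-i < \varphi(n)$. As each power is relatively prime to $n$, these form a reduced residue system mod $n$, so every primitive root of $n$ is congruent to one of them.

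The crux is the order formula $\mathrm{ord}_{n}(g^{k}) = \varphi(n)/\gcd(k,\varphi(n))$. Setting $d = \gcd(k,\varphi(n))$, I would observe that $(g^{k})^{\varphi(n)/d} = (g^{\varphi(n)})^{k/d} \equiv 1 \pmod{n}$, so by Lemma~\ref{lem:important} the order of $g^{k}$ divides $\varphi(n)/d$. Conversely, if $(g^{k})^{m} \equiv 1 \pmod{n}$, then $g^{km} \equiv 1 \pmod{n}$, so Lemma~\ref{lem:important} gives $\varphi(n) \mid km$; dividing through by $d$ yields $(\varphi(n)/d) \mid (k/d)m$, and since $\gcd(k/d,\varphi(n)/d) = 1$, we conclude $(\varphi(n)/d) \mid m$. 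Taking $m = \mathrm{ord}_{n}(g^{k})$ supplies the reverse divisibility.

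With the order formula in hand, $g^{k}$ is a primitive root of $n$ if and only if $\varphi(n)/\gcd(k,\varphi(n)) = \varphi(n)$, which happens exactly when $\gcd(k,\varphi(n)) = 1$. The main obstacle is the coprimality step in the order formula where we cancel $d$ from a divisibility relation; everything else is a direct application of Lemma~\ref{lem:important}.
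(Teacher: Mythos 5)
Your proof is correct: the paper itself gives no proof of this lemma (it is quoted from Rosen, Corollary~$8.4.1$), and your argument --- showing the powers $g, g^{2}, \ldots, g^{\varphi(n)}$ form a reduced residue system and then deriving $\mathrm{ord}_{n}(g^{k}) = \varphi(n)/\gcd(k,\varphi(n))$ from Lemma~\ref{lem:important} --- is exactly the standard route taken in the cited source. The one step worth making explicit is the cancellation $g^{i} \equiv g^{j} \Rightarrow g^{j-i} \equiv 1 \pmod{n}$, which is justified because $\gcd(g,n)=1$ makes $g$ invertible mod $n$; with that noted, nothing is missing.
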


Note that Lemma~\ref{lem:powersofg} is precisely the result alluded to in
Section~\ref{sec:intro}, which allows us to construct all the primitive roots
of $n$ in terms of one of them.

The only positive integers which have primitive roots are $1$, $2$, $4$,
$p^{k}$, and $2p^{k}$, where $p$ is any odd prime and $k \ge 1$. Together
with Lemma~\ref{lem:powersofg}, the following classic results allow us to
construct the primitive roots of all the numbers that have them in terms of
the primitive roots of the primes.

\begin{lemma}[\cite{ref:niven,ref:romero,ref:rosen}]\label{lem:classic}
	\begin{enumerate}
		\item\label{part:onerootofp2}
			If $g$ is a primitive root of a prime $p$, then at least one of
			(but not necessarily both of) $g$ and $g + p$ is a primitive root
			of $p^{2}$.
	
		\item\label{part:onerootofpk}
			If $p$ is any odd prime and $g$ is a primitive root of $p^{2}$,
			then $g$ is a primitive root of $p^{k}$ for all $k \ge 2$.
	
		\item\label{part:rootsof2pk}
			If $p$ is an odd prime, $k \ge 1$, and $g$ is a primitive root of
			$p^{k}$, then the odd number out of $g$ and $g + p^{k}$ is a
			primitive root of $2p^{k}$.
	\end{enumerate}
\end{lemma}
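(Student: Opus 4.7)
The plan is to prove the three parts in turn, invoking Lemma~\ref{lem:important} throughout to pin down orders via divisibility.

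For part~\ref{part:onerootofp2}, since $\varphi(p^2) = p(p-1)$ and both $g$ and $g+p$ reduce to $g$ mod $p$, each has order mod $p^2$ equal to either $p-1$ or $p(p-1)$. If $g^{p-1} \not\equiv 1 \pmod{p^2}$ then $g$ itself is a primitive root; otherwise the binomial theorem gives $(g+p)^{p-1} \equiv 1 - p g^{p-2} \pmod{p^2}$, which is not $1$ mod $p^2$ since $p \nmid g$, so $g+p$ has the full order. For part~\ref{part:rootsof2pk} I would use the Chinese Remainder Theorem: $\mathrm{ord}_{2p^k}(a) = \mathrm{lcm}(\mathrm{ord}_2(a), \mathrm{ord}_{p^k}(a))$ whenever $\gcd(a, 2p^k) = 1$. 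Because $p^k$ is odd, exactly one of $g$ and $g+p^k$ is odd, and this odd element has the same order mod $p^k$ as $g$ does, giving it order $\varphi(p^k) = \varphi(2p^k)$ mod $2p^k$.

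Part~\ref{part:onerootofpk} I would prove by induction on $k$, with $k = 2$ as the base case. Assuming $g$ is a primitive root of $p^k$, and writing $d = \mathrm{ord}_{p^{k+1}}(g)$, reducing mod $p^k$ together with $d \mid \varphi(p^{k+1})$ forces $d \in \{p^{k-1}(p-1),\, p^k(p-1)\}$. To exclude the smaller value I would record that $g^{p^{k-2}(p-1)} = 1 + c p^{k-1}$ with $p \nmid c$ (which holds because $g$ has order exactly $p^{k-1}(p-1)$ mod $p^k$), then raise to the $p$th power and expand. The tightest term is the $\binom{p}{2}$ one, contributing $p$-adic valuation $2k - 1$; the hypothesis that $p$ is odd (so that $p \mid \binom{p}{2}$) is essential at $k = 2$, where $2k - 1$ equals $k + 1$. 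The result is $g^{p^{k-1}(p-1)} \equiv 1 + c p^k \not\equiv 1 \pmod{p^{k+1}}$, so $d = p^k(p-1)$.

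The main technical obstacle is the tight bookkeeping in part~\ref{part:onerootofpk}: the $p$-adic margin is zero at the base case $k = 2$, and the whole lifting argument collapses for $p = 2$ at exactly this point, so the odd-prime hypothesis must be invoked explicitly rather than absorbed into a casual ``higher-order terms vanish'' estimate.
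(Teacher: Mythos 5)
Your proof is correct: the order-divisibility reductions, the binomial computation in part~\ref{part:onerootofp2}, the CRT argument in part~\ref{part:rootsof2pk}, and the induction with the $\binom{p}{2}$ valuation bookkeeping in part~\ref{part:onerootofpk} (including the observation that the margin is tight exactly at $k=2$ and fails for $p=2$) all check out. The paper gives no proof of its own here, deferring to the cited texts, and your argument is essentially the standard one found in those references.
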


Part~\ref{part:onerootofp2} of Lemma~\ref{lem:classic} is proved in
\cite[page~179]{ref:romero} and \cite[Theorem~8.9]{ref:rosen},
Part~\ref{part:onerootofpk} is proved in \cite[page~102]{ref:niven}
and \cite[page~179]{ref:romero}, and Part~\ref{part:rootsof2pk} is proved
in \cite[Theorem~8.14]{ref:rosen}.
Note that Parts \ref{part:onerootofp2} and~\ref{part:onerootofpk}
only give partial constructions of the primitive roots of prime powers;
Lemma~\ref{lem:powersofg} then gives the remaining primitive roots.

\section{An alternate construction}
\label{sec:alt}

In this section we give an alternate construction by Niven et al.~of the
primitive roots of $p^{2}$. We also state the main theorem of this article,
which gives a construction of the primitive roots of $p^{k+1}$ for $k \ge 2$.
We illustrate both of these results with a running example.

\begin{lemma}[{Niven et al.~\cite[Theorem~$2.39$]{ref:niven}}]\label{lem:niven}
	If $p$ is prime and $g$ is a primitive root of $p$, then $g + tp$ is
	a primitive root of $p^{2}$ for all values $0 \le t \le p-1$ except one.
	This exceptional value of $t$ is given by the formula
	\begin{equation}
		t = \frac{1 - g^{p-1}}{p}\left((p-1)g^{p-2}\right)^{-1},
		\label{eqn:except}
	\end{equation}
	where $t$ is understood to be reduced mod $p$ if necessary, and
	$((p-1)g^{p-2})^{-1}$ denotes the multiplicative inverse of
	$(p-1)g^{p-2}$, mod $p$.
	Furthermore, all the primitive roots of $p^{2}$ can be constructed
	this way.
\end{lemma}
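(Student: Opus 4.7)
The plan is to reduce the question to a single congruence and solve it. Since $g+tp \equiv g \pmod{p}$, the element $g+tp$ remains a primitive root of $p$, so its order mod $p$ equals $p-1$. Applying Lemma~\ref{lem:important} to the reduction mod $p$ forces $\mathrm{ord}_{p^2}(g+tp)$ to be a multiple of $p-1$, and it must also divide $\varphi(p^2)=p(p-1)$, leaving only the possibilities $p-1$ and $p(p-1)$. Consequently, $g+tp$ fails to be a primitive root of $p^{2}$ precisely when $(g+tp)^{p-1} \equiv 1 \pmod{p^2}$.

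Next I would expand $(g+tp)^{p-1}$ by the binomial theorem modulo $p^2$. Every term with a factor of $(tp)^{j}$ for $j \ge 2$ is killed, collapsing the expansion to
\[ (g+tp)^{p-1} \equiv g^{p-1} + (p-1)g^{p-2}\, tp \pmod{p^2}. \]
Setting this $\equiv 1$ and rearranging yields $g^{p-1}-1 \equiv -(p-1)g^{p-2}\,tp \pmod{p^2}$. Because $g^{p-1}-1$ is genuinely a multiple of $p$ (as $g$ is a primitive root of $p$), I can divide both sides by $p$ to reduce to a linear congruence for $t$ modulo $p$. The coefficient $(p-1)g^{p-2}$ is coprime to $p$, so this congruence has a unique solution; careful sign and inverse bookkeeping then recovers formula~(\ref{eqn:except}). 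This is the main obstacle of the proof: the division by $p$ must be justified, and the signs and inverses tracked precisely, so that the derived expression lines up exactly with~(\ref{eqn:except}). Exactly one $t \in \{0,1,\dots,p-1\}$ is therefore exceptional.

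For the final assertion, I would argue that every primitive root $h$ of $p^{2}$ is itself a primitive root of $p$. If $d = \mathrm{ord}_{p}(h)$, then $h^d = 1 + pk$ for some integer $k$, and a short binomial expansion of $(1+pk)^p$ shows $h^{dp} \equiv 1 \pmod{p^2}$. Lemma~\ref{lem:important} then forces $\varphi(p^2) = p(p-1)$ to divide $dp$, whence $d=p-1$. Thus $h$ lies in the residue class mod $p$ of a unique primitive root $g$ of $p$, so $h = g + tp$ for a unique $t \in \{0,1,\dots,p-1\}$, completing the construction.
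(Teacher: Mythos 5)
Your proof is correct, and it takes a genuinely more elementary route than the one the paper attributes to Niven et al. The paper's stated strategy (Section~\ref{sec:hensel}) is to apply Hensel's Lemma to $f(x)=x^{p-1}-1$: since $f'(g)=(p-1)g^{p-2}\not\equiv 0\pmod{p}$, the root $g$ of $f(x)\equiv 0\pmod{p}$ lifts to exactly one root mod $p^{2}$, and the lifting formula $t=-\frac{f(g)}{p}\left(f'(g)\right)^{-1}$ is precisely~(\ref{eqn:except}); that unique lift is the one exceptional $t$. You instead expand $(g+tp)^{p-1}$ by the binomial theorem modulo $p^{2}$ and solve the resulting linear congruence for $t$ by hand, which is in effect a direct proof of the nondegenerate case of Hensel's Lemma specialized to this $f$ --- so the two arguments compute the same quantity, but yours is self-contained and somewhat undercuts the paper's remark that ``it is hard to see how this formula could arise by more direct means.'' What the Hensel route buys in exchange is the machinery the paper reuses (in its degenerate case, Part~(\ref{item:none})) to prove Theorem~\ref{thm:main}. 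Your remaining ingredients --- the dichotomy that $\mathrm{ord}_{p^{2}}(g+tp)$ is either $p-1$ or $p(p-1)$, and the surjectivity argument that every primitive root of $p^{2}$ reduces mod $p$ to a primitive root of $p$ and hence has the form $g+tp$ --- are both correct and closely mirror the structure the paper itself uses in the proof of Theorem~\ref{thm:main}.
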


Let us use Lemma~\ref{lem:niven} to construct the primitive roots of
$3^{2} = 9$. It is easily verified that $2$ is the only (incongruent)
primitive root of the prime $3$, and applying Lemma~\ref{lem:niven}
gives that $2 + 3t$ is a primitive root of $3^{2}$ for all
$0 \le t \le 2$, except for
\[
	t	= \frac{1-2^{3-1}}{3}\left((3-1)2^{3-2}\right)^{-1}
		= \frac{-3}{3}(4)^{-1}
		= -1 \cdot 1
		= -1
		\equiv 2 \pmod{3}.
\]
Thus the numbers
\[
	2 + 3 \cdot 0 = 2 \qquad\mbox{and}\qquad 2 + 3 \cdot 1 = 5
\]
are primitive roots of $3^{2}$, and $2 + 3 \cdot 2 = 8$ is not a primitive
root. Furthermore, as $2$ is the only primitive root of $3$,
Lemma~\ref{lem:niven} implies that $2$ and $5$ are the only primitive
roots of $3^{2}$.

That was a very simple example; it is not at all hard to compute the primitive
roots of $3^{2} = 9$ directly. Thus we give a slightly more sophisticated
example: a construction of the primitive roots of $5^{2} = 25$.
It is easy to verify that $2$ and $3$ are the only primitive roots of the
prime $5$. By Lemma~\ref{lem:niven}, it follows that the numbers
\begin{eqnarray*}
	2 + 5t_{1} && (0 \le t_{1} \le 4) \\
	3 + 5t_{2} && (0 \le t_{2} \le 4)
\end{eqnarray*}
are primitive roots of $5^{2} = 25$ for all $0 \le t_{1},t_{2} \le 4$,
except for the values
\[
	t_{1}	= \frac{1-2^{5-1}}{5} \left((5-1)2^{5-2}\right)^{-1}
			= \frac{-15}{5} \cdot (32)^{-1}
			= -3 \cdot 3
			\equiv 1 \pmod{5}
\]
and
\[
	t_{2}	= \frac{1-3^{5-1}}{5} \left((5-1)3^{5-2}\right)^{-1}
			= \frac{-80}{5} \cdot (108)^{-1}
			= -16 \cdot 2
			\equiv 3 \pmod{5}.
\]
Thus the following numbers are all primitive roots of $5^{2}$:
\begin{eqnarray*}				
	2 + 5 \cdot 0 = 2 && 3 + 5 \cdot 0 = 3 \\
	2 + 5 \cdot 2 = 12 && 3 + 5 \cdot 1 = 8 \\
	2 + 5 \cdot 3 = 17 && 3 + 5 \cdot 2 = 13 \\
	2 + 5 \cdot 4 = 22 && 3 + 5 \cdot 4 = 23
\end{eqnarray*}
As $2$ and $3$ are the only primitive roots of $5$, it follows by
Lemma~\ref{lem:niven} that we have constructed all the primitive roots
of $5^{2}$.

The main theorem of this article extends the construction of primitive roots
given in Lemma~\ref{lem:niven} to arbitrary powers of an odd prime.

\begin{theorem}\label{thm:main}
	If $p$ is an odd prime, $k \ge 2$, and $g$ is a primitive root of $p^{k}$,
	then $g + tp^{k}$ is a primitive root of $p^{k+1}$ for all
	$0 \le t \le p-1$.
	Furthermore, all the primitive roots of $p^{k+1}$ can be constructed
	this way.
\end{theorem}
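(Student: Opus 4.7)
The plan is to fix a primitive root $g$ of $p^{k}$, set $h = g + tp^{k}$ with $0 \le t \le p-1$, and show that $\mathrm{ord}_{p^{k+1}}(h) = \varphi(p^{k+1})$. Since $h \equiv g \pmod{p^{k}}$, the two elements have the same order modulo $p^{k}$, namely $\varphi(p^{k}) = p^{k-1}(p-1)$, and in particular $\gcd(h, p) = 1$. Any exponent $m$ with $h^{m} \equiv 1 \pmod{p^{k+1}}$ also satisfies $h^{m} \equiv 1 \pmod{p^{k}}$, so by Lemma~\ref{lem:important} the order of $h$ modulo $p^{k+1}$ is a multiple of $\varphi(p^{k})$ and a divisor of $\varphi(p^{k+1}) = p \cdot \varphi(p^{k})$. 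Hence $\mathrm{ord}_{p^{k+1}}(h) \in \{\varphi(p^{k}), \varphi(p^{k+1})\}$, and the first half of the theorem reduces to showing $h^{\varphi(p^{k})} \not\equiv 1 \pmod{p^{k+1}}$.

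Expanding $(g + tp^{k})^{\varphi(p^{k})}$ by the binomial theorem, every term of degree at least $2$ in $tp^{k}$ carries a factor of $p^{2k}$, which vanishes modulo $p^{k+1}$ since $2k \ge k+1$. The linear term $\varphi(p^{k}) g^{\varphi(p^{k})-1} tp^{k}$ also vanishes modulo $p^{k+1}$, because $\varphi(p^{k}) \cdot p^{k} = p^{2k-1}(p-1)$ and $2k-1 \ge k+1$ when $k \ge 2$. Thus $h^{\varphi(p^{k})} \equiv g^{\varphi(p^{k})} \pmod{p^{k+1}}$ independently of $t$, and it remains to show $g^{\varphi(p^{k})} \not\equiv 1 \pmod{p^{k+1}}$.

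This last inequality is the main obstacle; the plan is to peel off one factor of $p$ by studying $g$ modulo $p^{k-1}$. Let $e = \mathrm{ord}_{p^{k-1}}(g)$ and write $g^{e} = 1 + cp^{k-1}$. A similar binomial calculation (again using $k \ge 2$) gives $g^{ep} \equiv 1 + cp^{k} \pmod{p^{k+1}}$, and in particular $g^{ep} \equiv 1 \pmod{p^{k}}$, which by Lemma~\ref{lem:important} forces $\varphi(p^{k}) \mid ep$; combined with $e \mid \varphi(p^{k-1})$ this gives $e = \varphi(p^{k-1})$. Moreover $p \nmid c$, for otherwise $g^{\varphi(p^{k-1})} = 1 + cp^{k-1} \equiv 1 \pmod{p^{k}}$ would contradict $\mathrm{ord}_{p^{k}}(g) = \varphi(p^{k})$. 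Raising once more to the $p$-th power then yields $g^{\varphi(p^{k})} = (1 + cp^{k-1})^{p} \equiv 1 + cp^{k} \pmod{p^{k+1}}$ with $p \nmid c$, as required.

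For the ``furthermore'' clause, I would count. The construction sends distinct pairs $(g, t)$, where $g$ ranges over primitive roots of $p^{k}$ in the standard reduced residue system and $0 \le t \le p-1$, to distinct values of $g + tp^{k}$ in the standard reduced residue system mod $p^{k+1}$, each of which is a primitive root of $p^{k+1}$ by the first part. This yields $p \cdot \varphi(\varphi(p^{k}))$ primitive roots; a short calculation using $\varphi(p^{k}(p-1)) = p \cdot \varphi(p^{k-1}(p-1))$ shows this equals $\varphi(\varphi(p^{k+1}))$, the total number of primitive roots of $p^{k+1}$, so the construction exhausts them.
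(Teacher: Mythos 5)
Your argument is correct, and it reaches the key inequality by a genuinely different route than the paper. The two endpoints agree: your opening reduction (the order of $g+tp^{k}$ modulo $p^{k+1}$ is a multiple of $\varphi(p^{k})$ and a divisor of $p\varphi(p^{k})$, hence is either $\varphi(p^{k})$ or $\varphi(p^{k+1})$) and your closing count ($p\varphi(\varphi(p^{k}))$ distinct constructed roots equals $\varphi(\varphi(p^{k+1}))$) are exactly the paper's. The divergence is in the central step, showing $(g+tp^{k})^{\varphi(p^{k})}\not\equiv 1\pmod{p^{k+1}}$. The paper applies Hensel's Lemma to $f(x)=x^{\varphi(p^{k})}-1$: since $f'(g)\equiv 0\pmod{p}$ (this is where $k\ge 2$ enters) and $g$ itself is not a root of $f$ modulo $p^{k+1}$ --- the latter imported from the classical fact (Part~\ref{part:onerootofpk} of Lemma~\ref{lem:classic}) that a primitive root of $p^{2}$ is a primitive root of every higher power --- the non-lifting case (Part~\ref{item:none}) of Hensel's Lemma shows no $g+tp^{k}$ can satisfy the congruence. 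You instead expand binomially twice: first to see that $(g+tp^{k})^{\varphi(p^{k})}\equiv g^{\varphi(p^{k})}\pmod{p^{k+1}}$ (your valuation bound $2k-1\ge k+1$ is precisely the paper's $f'(g)\equiv 0\pmod p$ in disguise, and is essentially an inlined proof of the relevant case of Hensel's Lemma), and second to compute $g^{\varphi(p^{k})}=(1+cp^{k-1})^{p}\equiv 1+cp^{k}\pmod{p^{k+1}}$ with $p\nmid c$, which re-proves from scratch the special case of the classical lifting result that the paper cites. What your route buys is a fully elementary, self-contained proof with no appeal to Hensel's Lemma or to Lemma~\ref{lem:classic}; what it costs is two explicit binomial estimates, and you should make explicit that the second one uses $p$ odd (when $k=2$ the quadratic term $\frac{p(p-1)}{2}c^{2}p^{2k-2}$ is divisible by $p^{k+1}$ only because $p$ divides $\frac{p(p-1)}{2}$), which is the exact analogue of the paper's reliance on Part~\ref{part:onerootofpk} of Lemma~\ref{lem:classic} holding only for odd primes. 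The only point stated without verification is that distinct pairs $(g,t)$ yield distinct residues modulo $p^{k+1}$; this is the one-line reduction modulo $p^{k}$ that the paper spells out, and is worth including.
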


As an illustration of Theorem~\ref{thm:main}, let us construct the primitive
roots of $3^{3} = 27$ from the primitive roots of $3^{2} = 9$. We saw above
that the primitive roots of $3^{2}$ are $2$ and $5$. It follows by
Theorem~\ref{thm:main} that the numbers
\begin{eqnarray*}
	2 + 3^{2} \cdot 0 = 2 && 5 + 3^{2} \cdot 0 = 5 \\
	2 + 3^{2} \cdot 1 = 11 && 5 + 3^{2} \cdot 1 = 14 \\
	2 + 3^{2} \cdot 2 = 20 && 5 + 3^{2} \cdot 2 = 23
\end{eqnarray*}
are all primitive roots of $3^{3}$---no exceptions. Furthermore, as $2$ and $5$
are the only primitive roots of $3^{2}$, Theorem~\ref{thm:main} implies that
the numbers above make up all the primitive roots of $3^{3}$. This can be
verified by direct calculation.

Let us go one step further. We now know that the primitive roots of $3^{3}$
are $2$, $5$, $11$, $14$, $20$, and $23$. Applying Theorem~\ref{thm:main}
again, it follows that the numbers
\begin{eqnarray*}
2 + 3^{3} \cdot 0 = 2 & 2 + 3^{3} \cdot 1 = 29 & 2 + 3^{3} \cdot 2 = 56 \\
5 + 3^{3} \cdot 0 = 5 & 5 + 3^{3} \cdot 1 = 32 & 5 + 3^{3} \cdot 2 = 59 \\
11 + 3^{3} \cdot 0 = 11 & 11 + 3^{3} \cdot 1 = 38 & 11 + 3^{3} \cdot 2 = 65 \\
14 + 3^{3} \cdot 0 = 14 & 14 + 3^{3} \cdot 1 = 41 & 14 + 3^{3} \cdot 2 = 68 \\
20 + 3^{3} \cdot 0 = 20 & 20 + 3^{3} \cdot 1 = 47 & 20 + 3^{3} \cdot 2 = 74 \\
23 + 3^{3} \cdot 0 = 23 & 23 + 3^{3} \cdot 1 = 50 & 23 + 3^{3} \cdot 2 = 77
\end{eqnarray*}
are all primitive roots of $3^{4} = 81$. Furthermore, Theorem~\ref{thm:main}
implies that these are all the primitive roots of $3^{4}$. This can be checked
by a direct calculation.

Finally, to complete the picture we recall a method which allows us to
construct the primitive roots of $2p^{k}$ from $p^{k}$, where $p$ is any
odd prime and $k \ge 1$.

\begin{lemma}[{\cite[page~173]{ref:burton}}]\label{lem:2pk}
	Let $p$ be an odd prime, let $k \ge 1$, and suppose that $g$ is a primitive
	root of $p^{k}$. Then $g$ is a primitive root of $2p^{k}$ if $g$ is odd, and
	$g + p^{k}$ is a primitive root of $2p^{k}$ if $g$ is even. Furthermore,
	every primitive root of $2p^{k}$ can be constructed in this way.
\end{lemma}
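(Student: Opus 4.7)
The plan is to verify directly that the constructed number is a primitive root of $2p^{k}$, and then show that every primitive root of $2p^{k}$ arises from this construction. The key simplification is that $\varphi(2p^{k}) = \varphi(2)\varphi(p^{k}) = \varphi(p^{k})$, so the two moduli share the same maximal order.

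First I would let $h$ denote the odd number among $g$ and $g+p^{k}$. Since $p$ is odd, $p^{k}$ is odd, so $g$ and $g+p^{k}$ have opposite parities and $h$ is well-defined. Also $h \equiv g \pmod{p^{k}}$, so $\gcd(h,p) = \gcd(g,p) = 1$; combined with $h$ odd this gives $\gcd(h,2p^{k}) = 1$. To compute $\mathrm{ord}_{2p^{k}}(h)$, I set $n = \mathrm{ord}_{2p^{k}}(h)$. By Lemma~\ref{lem:important}, $n \mid \varphi(2p^{k}) = \varphi(p^{k})$. Conversely, $h^{n} \equiv 1 \pmod{2p^{k}}$ implies $h^{n} \equiv 1 \pmod{p^{k}}$, and since $h \equiv g \pmod{p^{k}}$ has order $\varphi(p^{k})$ mod $p^{k}$, Lemma~\ref{lem:important} forces $\varphi(p^{k}) \mid n$. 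Hence $n = \varphi(p^{k}) = \varphi(2p^{k})$, so $h$ is a primitive root of $2p^{k}$.

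For the converse, I would take any primitive root $h'$ of $2p^{k}$. Then $h'$ must be odd (coprime to $2$) and coprime to $p$, and reducing mod $p^{k}$ yields a unique $g' \in \{1,\ldots,p^{k}\}$ with $h' \equiv g' \pmod{p^{k}}$ and $\gcd(g',p) = 1$. The critical step is to show $g'$ is a primitive root of $p^{k}$: if $m = \mathrm{ord}_{p^{k}}(h')$, then $(h')^{m} \equiv 1 \pmod{p^{k}}$, and since $h'$ is odd also $(h')^{m} \equiv 1 \pmod{2}$; because $\gcd(2,p^{k}) = 1$, this gives $(h')^{m} \equiv 1 \pmod{2p^{k}}$, so Lemma~\ref{lem:important} yields $\varphi(2p^{k}) = \mathrm{ord}_{2p^{k}}(h') \mid m$. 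Combined with $m \mid \varphi(p^{k}) = \varphi(2p^{k})$, this forces $m = \varphi(p^{k})$, so $g'$ is a primitive root of $p^{k}$. Finally, $g'$ and $g' + p^{k}$ have opposite parities, and $h'$---being odd, congruent to $g'$ mod $p^{k}$, and in $[1,2p^{k}]$---must coincide with whichever of $g'$ and $g'+p^{k}$ is odd, i.e., with the output of the construction applied to $g'$.

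The main obstacle is the converse direction, and specifically using the oddness of $h'$ to upgrade a congruence mod $p^{k}$ to one mod $2p^{k}$, which is what forces $\mathrm{ord}_{p^{k}}(h') = \mathrm{ord}_{2p^{k}}(h')$ and hence makes $g'$ a primitive root of $p^{k}$. The forward direction is a routine two-sided order-squeezing argument using Lemma~\ref{lem:important} at both moduli.
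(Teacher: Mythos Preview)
Your argument is correct. The paper itself does not supply a proof of this lemma---it is quoted from Burton~\cite{ref:burton} and used as a black box---so there is no in-paper proof to compare against. Your two-sided order squeeze via Lemma~\ref{lem:important}, together with the parity observation that exactly one of $g$ and $g+p^{k}$ is odd, is the standard direct argument and matches what one finds in the cited references; the converse you give (upgrading $(h')^{m}\equiv 1\pmod{p^{k}}$ to $(h')^{m}\equiv 1\pmod{2p^{k}}$ using the oddness of $h'$, then pinning $h'$ down as the odd member of $\{g',\,g'+p^{k}\}$ inside the standard residue system) is clean and complete.
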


From Lemma~\ref{lem:2pk} it is clear that all the odd primitive roots of
$3^{4}$ are primitive roots of $2 \cdot 3^{4} = 162$, and adding $3^{4}$ to
the even primitive roots of $3^{4}$ will give the remaining primitive roots
of $2 \cdot 3^{4}$. Thus the numbers
	\begin{eqnarray*}
		5 && 2 + 3^{4} = 83 \\
		11 && 14 + 3^{4} = 95 \\
		23 && 20 + 3^{4} = 101 \\
		29 && 32 + 3^{4} = 113 \\
		41 && 38 + 3^{4} = 119 \\
		47 && 50 + 3^{4} = 131 \\
		59 && 56 + 3^{4} = 137 \\
		65 && 68 + 3^{4} = 149 \\
		77 && 74 + 3^{4} = 155
	\end{eqnarray*}
are all primitive roots of $2 \cdot 3^{4}$, and Lemma~\ref{lem:2pk} implies
that there are no others.

\section{Hensel's Lemma}
\label{sec:hensel}

In order to complete the proof of Lemma~\ref{lem:niven}, Niven et al.~use a
result known as \emph{Hensel's Lemma}. Hensel's Lemma is used to show that
there is exactly one value in the range $0 \le t \le p-1$ such that $g + tp$
is not a primitive root of $p^{2}$ (where $g$ is a primitive root of the prime
$p$). Furthermore, Hensel's Lemma gives the \emph{formula} (\ref{eqn:except})
for this exceptional $t$, and it is hard to see how this formula could arise
by more direct means.

Hensel's Lemma originated in the theory of $p$-adic numbers, but it also has
an equivalent form in the theory of polynomial congruences. We will be using
the latter form. In this section we recall some basic
definitions from the theory of polynomial congruences, and we state Hensel's
Lemma. For more information about polynomial congruences
and the corresponding version of Hensel's Lemma, see \cite{ref:niven} and
\cite{ref:romero}.

Let $f(x)$ be a polynomial with integer coefficients. Then a
\emph{polynomial congruence} is a congruence of the form
\begin{equation}
	f(x) \equiv 0 \pmod{n},
	\label{eqn:genpolycon}
\end{equation}
where $n$ is called the {\em modulus} of the congruence.
An integer $x_{0}$ satisfying $f(x_{0}) \equiv 0 \pmod{n}$
is called a {\em solution} of the congruence~(\ref{eqn:genpolycon}).

In this article we will only be interested in polynomial congruences
of prime power moduli.
All of these congruences have the form
\begin{equation}
	f(x) \equiv 0 \pmod{p^{k}},
	\label{eqn:pk}
\end{equation}
where $p$ is prime and $k \ge 1$.
Now it turns out (see \cite{ref:romero} for details)
that all the solutions of
\begin{equation}
	f(x) \equiv 0 \pmod{p^{k+1}}
	\label{eqn:pk+1}
\end{equation}
can be constructed from the solutions of (\ref{eqn:pk}).
Specifically, if $x_{1}$ is a solution of (\ref{eqn:pk+1})
then we can always write
\begin{equation}
	x_{1} = x_{0} + tp^{k},
	\label{eqn:lift}
\end{equation}
where $x_{0}$ is some solution to (\ref{eqn:pk}) and the integer $t$
is yet to be determined.
Following \cite{ref:niven}, we say that the solution $x_{1}$ in
(\ref{eqn:lift}) \emph{lies above} the solution $x_{0}$, and that $x_{0}$
\emph{lifts to} $x_{1}$.

\begin{lemma}[Hensel's Lemma \cite{ref:niven,ref:romero}]
	Let $f(x)$ be a polynomial with integer coefficients, and suppose that $p$
	is prime and that $k \ge 1$. Let $x_{0}$ be a solution to
	$f(x) \equiv 0 \pmod{p^{k}}$.
	Then exactly one of the following occurs:
	\begin{enumerate}
		\item\label{item:one}
			If $f'(x_{0}) \not\equiv 0 \pmod{p}$ then $x_{0}$ lifts to exactly
			one solution $x_{1}$ to $f(x) \equiv 0 \pmod{p^{k+1}}$.
			This solution is given by $x_{1} = x_{0} + tp^{k}$, where
			\[
				t = -\frac{f(x_{0})}{p^{k}}\left(f'(x_{0})\right)^{-1}.
			\]
			Here $t$ is understood to be reduced mod $p$ if necessary, and
			$(f'(x_{0}))^{-1}$ stands for the multiplicative inverse of
			$f'(x_{0})$, mod $p$.

		\item\label{item:many}
			If $f'(x_{0}) \equiv 0 \pmod{p}$ and $x_{0}$ is a solution of
			$f(x) \equiv 0 \pmod{p^{k+1}}$, then $x_{0}$ lifts to
			$x_{1} = x_{0} + tp^{k}$ for all integers $0 \le t \le p - 1$.
			Thus $x_{0}$ lifts to $p$ distinct solutions of
			$f(x) \equiv 0 \pmod{p^{k+1}}$.

		\item\label{item:none}
			Finally, if $f'(x_{0}) \equiv 0 \pmod{p}$ but $x_{0}$ is not a
			solution of $f(x) \equiv 0 \pmod{p^{k+1}}$, then $x_{0}$ does not
			lift to any solutions of $f(x) \equiv 0 \pmod{p^{k+1}}$.
			Thus if $f(x) \equiv 0 \pmod{p^{k+1}}$ has any
			solutions at all, then they do not lie above $x_{0}$.
	\end{enumerate}
\end{lemma}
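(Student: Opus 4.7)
The plan is to expand $f(x_{0} + tp^{k})$ by Taylor's formula around $x_{0}$, truncate modulo $p^{k+1}$, and thereby reduce the lifting problem to a single linear congruence in $t$ modulo $p$. The three cases of the lemma then correspond exactly to the standard trichotomy for a linear congruence $at \equiv b \pmod{p}$: either $a \not\equiv 0 \pmod{p}$, giving a unique solution for $t$; or $a \equiv 0 \equiv b \pmod{p}$, giving every $t$ in $\{0,1,\dots,p-1\}$; or $a \equiv 0$ but $b \not\equiv 0 \pmod{p}$, giving no solution at all.

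First I would write the Taylor expansion
\[
	f(x_{0} + tp^{k}) = f(x_{0}) + tp^{k} f'(x_{0}) + \sum_{j \ge 2} \frac{f^{(j)}(x_{0})}{j!}(tp^{k})^{j},
\]
noting that because $f$ has integer coefficients each coefficient $f^{(j)}(x_{0})/j!$ is also an integer (a fact that follows from the binomial theorem applied monomial by monomial). Since $k \ge 1$ we have $jk \ge 2k \ge k+1$ for every $j \ge 2$, so every term in the sum is divisible by $p^{k+1}$. Reducing modulo $p^{k+1}$ therefore collapses the expansion to the clean identity
\[
	f(x_{0} + tp^{k}) \equiv f(x_{0}) + tp^{k} f'(x_{0}) \pmod{p^{k+1}}.
\]

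Next, I would use the hypothesis $f(x_{0}) \equiv 0 \pmod{p^{k}}$ to observe that $f(x_{0})/p^{k}$ is an integer, and then divide the requirement $f(x_{0} + tp^{k}) \equiv 0 \pmod{p^{k+1}}$ through by $p^{k}$. This yields the equivalent linear congruence
\[
	\frac{f(x_{0})}{p^{k}} + t\, f'(x_{0}) \equiv 0 \pmod{p}.
\]
From here each of the three cases follows directly. If $f'(x_{0}) \not\equiv 0 \pmod{p}$ then $f'(x_{0})$ has an inverse mod $p$, and solving gives the unique value of $t$ stated in Part~\ref{item:one}. If instead $f'(x_{0}) \equiv 0 \pmod{p}$, the congruence collapses to $f(x_{0})/p^{k} \equiv 0 \pmod{p}$, which is equivalent to $f(x_{0}) \equiv 0 \pmod{p^{k+1}}$; in that situation either every $t$ in $\{0,1,\dots,p-1\}$ works (Part~\ref{item:many}) or none does (Part~\ref{item:none}), precisely as stated.

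The one step that requires a moment's thought is the verification that the higher Taylor coefficients $f^{(j)}(x_{0})/j!$ are integers, so that no stray denominator obstructs the reduction mod $p^{k+1}$; but this is routine bookkeeping and not a genuine obstacle. Once that fact is in hand, the rest of the argument is essentially a single linear congruence in $t$ modulo $p$.
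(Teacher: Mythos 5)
Your proof is correct: the Taylor expansion of $f(x_{0}+tp^{k})$, the integrality of the coefficients $f^{(j)}(x_{0})/j!$, and the reduction to the linear congruence $f(x_{0})/p^{k} + t f'(x_{0}) \equiv 0 \pmod{p}$ together give exactly the stated trichotomy, and the value of $t$ you solve for in the nonsingular case matches the formula in Part~\ref{item:one}. The paper itself offers no proof of this lemma --- it is quoted from \cite{ref:niven} and \cite{ref:romero} --- but your argument is the standard one given in those references, so there is nothing to reconcile.
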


\section{Proof of the Main Theorem}
\label{sec:proof}

\begin{proof}[Proof of Theorem~\ref{thm:main}]
	First we show that for any $0 \le t \le p-1$, either $g + tp^{k}$ is a
	primitive root of $p^{k+1}$, or $\mathrm{ord}_{p^{k+1}}(g + tp^{k})
	= \varphi(p^{k})$. Set $h = \mathrm{ord}_{p^{k+1}}(g + tp^{k})$.
	Then $(g + tp^{k})^{h} \equiv 1 \pmod{p^{k+1}}$, and so
	$(g + tp^{k})^{h} \equiv 1 \pmod{p^{k}}$. Now $g \equiv g + tp^{k}
	\pmod{p^{k}}$, so
	\[
		g^{h} \equiv (g + tp^{k})^{h} \equiv 1 \pmod{p^{k}}.
	\]
	This means that $h$ is a multiple of $\mathrm{ord}_{p^{k}}(g)$,
	which is $\varphi(p^{k})$ since $g$ is a primitive root of $p^{k}$.
	Hence we can write
	\begin{equation}
		h = y\varphi(p^{k})
		\label{eqn:hmult}
	\end{equation}
	for some positive integer $y$.
	
	On the other hand, $h$ was defined to be the order of $g + tp^{k}$,
	mod $p^{k+1}$. Thus $h$ is a divisor of $\varphi(p^{k+1})
	= p\varphi(p^{k})$, so
	\begin{equation}
		p\varphi(p^{k}) = xh
		\label{eqn:hdiv}
	\end{equation}
	for some positive integer $x$.
	Putting Equations (\ref{eqn:hmult}) and~(\ref{eqn:hdiv}) together give that
	\[
		p\varphi(p^{k}) = xh = xy\varphi(p^{k}),
	\]
	which implies that
	\[
		xy = p.
	\]
	Since $p$ is prime and $y$ is positive, this implies that either $y = p$
	or $y = 1$.
	
	Together with (\ref{eqn:hmult}), this information gives that either
	$h = p\varphi(p^{k}) = \varphi(p^{k+1})$ or $h = \varphi(p^{k})$.
	In the former case we have that $g + tp^{k}$ is a primitive root of
	$p^{k+1}$, and in the latter case we have that
	$\mathrm{ord}_{p^{k+1}}(g + tp^{k}) = \varphi(p^{k})$.
	
	Thus, to prove Theorem~\ref{thm:main}, it suffices to show that
	$\mathrm{ord}_{p^{k+1}}(g + tp^{k}) \not= \varphi(p^{k})$
	for all $0 \le t \le p-1$.
	Suppose that there \emph{was} some $0 \le t \le p-1$ such that
	$\mathrm{ord}_{p^{k+1}}(g + tp^{k}) = \varphi(p^{k})$.
	Then
	\[
		(g + tp^{k})^{\varphi(p^{k})} \equiv 1 \pmod{p^{k+1}},
	\]
	so $g + tp^{k}$ is a solution of the congruence $f(x) \equiv 0
	\pmod{p^{k+1}}$, where $f(x) = x^{\varphi(p^{k})} - 1$.
	As $g$ is a primitive root of $p^{k}$, $g$ is a solution of
	$f(x) \equiv 0 \pmod{p^{k}}$, and hence $g$ lifts to $g + tp^{k}$.
	Now $f'(x) = x^{\varphi(p^{k})-1}$, so we have that
	\begin{eqnarray*}
		f'(g)	&=& \varphi(p^{k})g^{\varphi(p^{k}) - 1} \\
				&=& (p^{k} - p^{k - 1})g^{\varphi(p^{k}) - 1} \\
				&=& p(p^{k - 1} - p^{k - 2})g^{\varphi(p^{k}) - 1} \\
				&\equiv& 0 \pmod{p}.
	\end{eqnarray*}
	Notice that the above fails if $k = 1$. Thus it is imperative
	that we have $k \ge 2$.
	
	Finally, since $g$ is a primitive root of $p^{2}$, it follows from
	Part~(\ref{part:onerootofpk}) of Lemma~\ref{lem:classic}
	that $g$ is a primitive root of $p^{k+1}$. Hence
	\[
		g^{\varphi(p^{k})} \not\equiv 1 \pmod{p^{k+1}},
	\]
	since $\varphi(p^{k}) < \varphi(p^{k+1}) = \mathrm{ord}_{p^{k+1}}(g)$.
	Thus $g$ is not a solution of $f(x) \equiv 0 \pmod{p^{k+1}}$, so
	Part~(\ref{item:none}) of Hensel's Lemma applies.
	It follows that $g$ does not lift to any solutions of
	$f(x) \equiv 0 \pmod{p^{k+1}}$, which is a contradiction.
	
	Therefore $\mathrm{ord}_{p^{k+1}}(g + tp^{k}) \not= \varphi(p^{k})$
	for all $0 \le t \le p-1$, so $g + tp^{k}$ is a primitive root of
	$p^{k+1}$ for all $0 \le t \le p-1$.
	It only remains to show that we can construct all the primitive roots
	of $p^{k+1}$ in this way. As $t$ ranges from $0$ to $p-1$, $tp^{k}$
	runs through all the multiples of $p^{k}$ from $0$ to $(p-1)p^{k}$.
	As all of these are distinct mod $p^{k+1}$, it follows that the numbers
	$g + tp^{k}$ are distinct mod $p^{k+1}$ for all $0 \le t \le p-1$.
	Furthermore, if $g_{1}$ and $g_{2}$ are any primitive roots of
	$p^{k}$ and $0 \le t_{1},t_{2} \le p-1$, then
		$g_{1} + t_{1}p^{k} \equiv g_{2} + t_{2}p^{k} \pmod{p^{k+1}}$
	implies that
		$g_{1} + t_{1}p^{k} \equiv g_{2} + t_{2}p^{k} \pmod{p^{k}}$,
	and hence
	\[
		g_{1} - g_{2} \equiv (t_{1} - t_{2})p^{k} \equiv 0 \pmod{p^{k}},
	\]
	that is, $g_{1} \equiv g_{2} \pmod{p^{k}}$.
	
	It follows that as $g$ ranges through all the $\varphi(\varphi(p^{k}))$
	primitive roots of $p^{k}$, and as $t$ ranges independently from $0$
	to $p-1$, there will be $p\varphi(\varphi(p^{k}))$ distinct constructed
	primitive roots of $p^{k+1}$. As it can be verified that
	$p\varphi(\varphi(p^{k})) = \varphi(\varphi(p^{k+1}))$, it follows
	that we have constructed all the primitive roots of $p^{k+1}$
	exactly once.
\end{proof}

\bigskip\noindent
\textbf{Acknowledgements} The author thanks his supervisor Tom~E.~Hall
for excellent suggestions and advice, and Andrew~Z.~Tirkel and
Charles~F.~Osborne for encouraging him to write this article.

\noindent
\textbf{Nathan Jolly} is a mathematics {Ph.D.} student at Monash University,
Australia. The present article grew out of his honors thesis.
\emph{School of Mathematical Sciences, Monash University, Wellington Road,
Clayton, Victoria 3800, Australia}\\
\emph{email: nathan.jolly@sci.monash.edu.au}

\end{document}